\title{Admissible sets do not exist for all parameters}
\author{
Luke Pebody
}
\date{\today}
\newcommand{\F}{\mathbb{F}}
\newcommand{\supp}{\mathrm{Supp}\textrm{ }}
\newtheorem{thm}{Theorem}
\newtheorem{lem}[thm]{Lemma}
\newtheorem{clm}[thm]{Claim}
\newtheorem{cor}[thm]{Corollary}
\newtheorem{qn}[thm]{Question}
\theoremstyle{definition}
\begin{document}
\maketitle

\begin{abstract}
A cap set in $\F_3^n$ is a subset that contains no three elements adding to 0. 
Building on a construction of Edel~\cite{Edel}, a recent paper of Tyrrell~\cite{Tyrrell} gave
the first improvement to the lower bound for a size of a cap set in two decades
showing that, for large enough $n$, there is always a cap set in $\F_3^n$ of size
at least $2.218^n$. This was shown by constructing what is called an $I(11,7)$ admissible 
set.

An admissible set is a subset of $\{0,1,2\}^m$ such that the supports of the vectors form
an antichain with respect to inclusion and each triple of vectors has some coordinate where
either exactly one of them is non-zero or exactly two are and they have different values. 
Such an admissible set is said to be $I(m,w)$ if it is of size $\binom mw$ and all of the 
vectors have exactly $w$ non-zero elements. In Tyrrell's paper they conjectured that 
$I(m,w)$ admissible set exists for all parameters. We resolve this conjecture by showing that 
there exists an $N$ such that an $I(N,4)$ admissible set does not exist.

We refer to the type of a vector in $\{0,1,2\}^m$ is the ordered sequence of its non-zero
coefficients. The vectors of type $12$ form an $I(m,2)$ admissible set and the vectors of type
$121$ form an $I(m,3)$ admissible set (as can be easily checked by an interested reader). 
Sadly it is quite easily proved that there is no $I(6,4)$ admissible set where all
vectors are of the same type. It follows by Ramsey's Theorem applied to 4-regular hypergraphs
that there exists an $N$ such that an $I(N,4)$ admissible set does not exist.

A similar argument shows that there exists an $N'$ such that an $I(N',N'-2)$ admissible set
does not exist.
Since we can construct an $I(m-1,w)$ and an $I(m-1,w-1)$ admissible set from an
$I(m,w)$ admissible set, it follows that there are only finitely many $I(m,w)$ admissible sets exist other
than the known forms $I(m,1)$, $I(m,2)$, $I(m,3)$ and $I(m,m-1)$.
\end{abstract}

\section{Definitions}
Given a vector $v\in\{0, 1, 2\}^m$, the {\em support of v} (written $\supp v$) is 
$\{i:i\in[m], v_i\ne 0\}$. Given $S\subseteq[m]$, write $V_S$ for the set of vectors in
$\{0, 1, 2\}^m$ with support $S$. Clearly $|V_S|=2^{|S|}$. We 
say that a subset of $\{0, 1, 2\}^m$ is $I(m,w)$ if it contains exactly one element
of $V_S$ for all subsets $S$ of $[m]$ of size $w$ (and no other sets).

If vectors $v_1, v_2\in\{0,1,2\}^m$ have $\supp v_1\subseteq\supp v_2$,
we say that $\{v_1, v_2\}$ form a {\em clash}. 
Further, if vectors $v_1, v_2, v_3\in\{0, 1, 2\}^m$ are such that there is no coordinate $i$
for which exactly one of the vectors has a non-0 $i$ coordinate and also no coordinate $i$ for
which all of the $i$ coordinates are different, we say that $\{v_1, v_2, v_3\}$ form a 
{\em clash}. 
We say that a subset of $\{0, 1, 2\}^m$ is {\em admissible} if it does not contain a clash.

We note that by construction the supports of the vectors in an $I(m,w)$ set are an antichain,
so no two elements can form a clash, and so an $I(m,w)$ set is admissible precisely if it contains
no 3-element clash.

Tyrrell showed~\cite{Tyrrell} how to construct $I(11,6), I(11,7)$ and $I(10,6)$ admissible sets
and conjectured that $I(m,w)$ admissible sets exist for all $0<w<m$. We show in this note that 
conjecture is false, proving specifically
\begin{thm}\label{T:main}
There exist only finitely many $4\le w\le m-2$ for which $I(m,w)$ admissible sets exist.
\end{thm}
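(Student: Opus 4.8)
The plan is to combine two elementary descent operations on admissible sets with hypergraph Ramsey theory. Every sufficiently large $I(m,w)$ admissible set will be forced, by a Ramsey argument, to contain a small sub-configuration of a very rigid shape, and two such shapes — one relevant to small $w$, one to $w$ close to $m$ — can be ruled out by hand; pushing these non-existence results around with the descent operations then confines the surviving parameters to a bounded region.

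I would first record the two descent operations. Let $A$ be an $I(m,w)$ admissible set. If $w<m$, the vectors of $A$ whose support avoids the coordinate $m$, read as elements of $\{0,1,2\}^{m-1}$, form an $I(m-1,w)$ admissible set: they realise each $w$-subset of $[m-1]$ exactly once, and three vectors that all vanish at a common coordinate form a clash exactly when they do so after that coordinate is deleted. If $w\ge 2$, the vectors of $A$ whose support contains $m$, restricted to the coordinates $[m-1]$, form an $I(m-1,w-1)$ admissible set: they realise each $(w-1)$-subset of $[m-1]$ exactly once, and since the three original vectors are all non-zero at coordinate $m$, that coordinate is never one at which exactly one of the three is non-zero nor one at which all three differ (only the values $1,2$ occur there), so a clash among the restrictions lifts to a clash in $A$. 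Iterating the second operation $w-4$ times (valid when $w\ge 4$) turns an $I(m,w)$ admissible set into an $I(m-w+4,4)$ one, and iterating the first operation $m-w-2$ times (valid when $w\le m-2$) turns it into an $I(w+2,(w+2)-2)$ one.

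Next I would prove two finite non-existence statements, both about $I(6,4)$ admissible sets. (a) There is no \emph{single-type} $I(6,4)$ admissible set, meaning one in which the vector supported on a $4$-subset $\{s_1<s_2<s_3<s_4\}$ of $[6]$ always carries a fixed pattern of values $(t_1,t_2,t_3,t_4)\in\{1,2\}^4$ at $(s_1,s_2,s_3,s_4)$; up to swapping $1\leftrightarrow 2$ and reversing the order of $[6]$ only a handful of patterns need checking, and each contains a three-element clash. (b) There is no \emph{staircase-type} $I(6,4)$ admissible set, meaning one in which, writing $v^{ab}$ for the vector supported on $[6]\setminus\{a,b\}$ with $a<b$, the entry $v^{ab}_c$ depends only on whether $c<a$, $a<c<b$ or $c>b$, taking the values $\gamma$, $\beta$, $\alpha$ respectively. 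Here: if $\alpha=\beta$ then $\{v^{12},v^{13},v^{14}\}$ is a clash, and if $\beta=\gamma$ then (apply the order-reversal of $[6]$) $\{v^{36},v^{46},v^{56}\}$ is a clash, so $\alpha=\gamma\ne\beta$ is forced; but then $v^{12}$, $v^{34}$, $v^{56}$ each equal $\alpha$ throughout their supports, and these supports have disjoint complements, so these three vectors form a clash.

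Finally I would run the two Ramsey arguments and assemble the conclusion. Suppose an $I(m,4)$ admissible set exists; it amounts to a colouring of the $4$-subsets of $[m]$ by the $2^4$ types in $\{1,2\}^4$, so by Ramsey's theorem for $4$-uniform hypergraphs there is a constant $N_1$ such that, if $m\ge N_1$, some $6$-subset $G$ is monochromatic, and the vectors supported on $4$-subsets of $G$ then form, after relabelling $G$ as $[6]$, a single-type $I(6,4)$ admissible set, contradicting (a). Suppose an $I(m,m-2)$ admissible set exists; colour each triple $a<b<c$ of $[m]$ by $(v^{bc}_a,v^{ac}_b,v^{ab}_c)\in\{1,2\}^3$, so by Ramsey's theorem for $3$-uniform hypergraphs there is a constant $N'$ such that, if $m\ge N'$, some $6$-subset $G$ is monochromatic, and deleting (via the second descent operation) all coordinates outside $G$ leaves an $I(6,4)$ admissible set which, by monochromaticity, is of staircase type, contradicting (b). Hence no $I(m,4)$ admissible set exists for $m\ge N_1$ and no $I(m,m-2)$ one for $m\ge N'$. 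Now if $4\le w\le m-2$ and an $I(m,w)$ admissible set exists, the two descent chains give an $I(m-w+4,4)$ and an $I(w+2,(w+2)-2)$ admissible set, forcing $m-w+4<N_1$ and $w+2<N'$; thus $w\le N'-3$ and $m\le N_1+N'-8$, so only finitely many pairs $(m,w)$ remain. The main obstacle — the only part that is not routine bookkeeping — is the hand-verification of (a) and (b); the one further subtlety is checking that the descent operations, the Ramsey colourings, and the relabelling of $G$ as $[6]$ all preserve the single-type and staircase structures, which they do because each of these structures is defined purely in terms of the relative order of the coordinates involved.
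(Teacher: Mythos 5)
Your proposal is correct and takes essentially the same route as the paper: the two coordinate-deletion descent operations, a type-colouring of $4$-sets and a triple-colouring for the $I(m,m-2)$ case, hypergraph Ramsey, and a finite verification that monochromatic configurations contain clashes. The only differences are cosmetic --- you colour with all $16$ full types where the paper uses $8$ prefix-types (so you need monochromatic $6$-sets throughout rather than mostly $5$-sets), and your claim (a) is asserted rather than exhibited, but that finite check does go through exactly as in Theorem~\ref{T:clashes exist} and Lemma~\ref{L:clashes can be starred}.
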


Since it is known that $I(m,1)$, $I(m,2)$, $I(m,3)$, $I(m,m-1)$ and $I(m,m)$ admissible sets 
exist (details are given in Section~\ref{S:closing}), this resolves the existence of
$I(m,w)$ admissible sets for all but finitely many pairs $(m, w)$.

\section{Types}
Given a vector $v\in\{0,1,2\}^n$ and vector $t\in\{1,2\}^k$, we say that {\em $v$ is of type 
$t$} if there are at least $k$ non-zero coordinates in $v$ and the 
$i^\textrm{th}$ non-zero coordinate of 
$v$ is the $i^\textrm{th}$ coordinate of $w$.

Given a vector $t\in\{1,2\}^k$, and subsets $S_1, S_2, S_3$ of $[m]$ of size $w$, say that
$S_1, S_2, S_3$ form a {\em type $t$ clash} if whenever $v_1, v_2, v_3$ are elements of
$V_{S_1}, V_{S_2}$ and $V_{S_3}$ respectively and are of type $t$, $\{v_1, v_2, v_3\}$ are a 
clash. As a reminder this means that there is no coordinate where exactly one
of the vectors is non-zero, and no coordinate where two are non-zero and they are different.

\begin{clm}\label{C:clashes matter}
If $S_1, S_2, S_3\in\binom{[m]}w$ form a type $t$ clash then there is no $I(m,w)$ admissible set
where all the elements are of type $t$.	
\end{clm}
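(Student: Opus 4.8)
The plan is to argue by contradiction, essentially by unwinding the definitions. Suppose $A\subseteq\{0,1,2\}^m$ were an $I(m,w)$ admissible set all of whose vectors are of type $t$. Since $A$ is $I(m,w)$, for every $S\in\binom{[m]}w$ it contains exactly one vector with support $S$; applying this to $S_1,S_2,S_3$ yields vectors $v_1,v_2,v_3\in A$ with $\supp v_i=S_i$, i.e. $v_i\in V_{S_i}$, for $i=1,2,3$.

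Now each $v_i$ is of type $t$, because every element of $A$ is. So $v_1,v_2,v_3$ is exactly a triple of the kind occurring in the definition of a type $t$ clash for $S_1,S_2,S_3$, and the hypothesis that $S_1,S_2,S_3$ form such a clash says precisely that $\{v_1,v_2,v_3\}$ is a clash. This contradicts the admissibility of $A$. (The parenthetical remark in the excerpt — that the supports of an $I(m,w)$ set form an antichain, so no pair can clash, so admissibility is exactly the absence of $3$-element clashes — is the reason it suffices to produce a $3$-element clash here; when $S_1,S_2,S_3$ are distinct the $v_i$ are distinct and $\{v_1,v_2,v_3\}$ is a genuine $3$-element clash.) Hence no such $A$ exists.

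Two minor points, neither a real obstacle. If $|t|>w$ then no vector whose support has size $w$ can be of type $t$, so there is no $I(m,w)$ set all of whose vectors are of type $t$ and the statement is vacuous; and if two of the $S_i$ coincide then the associated $v_i$ coincide, but since clashes and type $t$ clashes are defined for ordered triples with repetitions allowed this changes nothing. The real content of the Claim is therefore not this reduction at all but the subsequent combinatorial task: exhibiting, for a concrete type $t$ (for the theorem, a length-$4$ type with $w=4$), three sets $S_1,S_2,S_3\in\binom{[m]}w$ that genuinely form a type $t$ clash. That verification — ruling out, for every choice of the type-$t$ vectors supported on $S_1,S_2,S_3$, the existence of a coordinate witnessing non-clashing — is where I expect the work to lie.
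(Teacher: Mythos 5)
Your proof is correct and is essentially the paper's own argument: the $I(m,w)$ set must contain a vector supported on each of $S_1,S_2,S_3$, these are all of type $t$ by hypothesis, and the definition of a type $t$ clash then forces them to clash, contradicting admissibility. The additional remarks on degenerate cases are fine but not needed.
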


\begin{proof}
Any $I(m,w)$ admissible set contains elements of $V_{S_1}, V_{S_2}$ and $V_{S_3}$. If all three
elements were of type $t$, they would form a clash and hence the set would not be admissible.	
\end{proof}

Now we show the existence of various typed clashes.

\begin{thm}\label{T:clashes exist}
$\{0134, 0234, 1234\}$ form a type $11$ clash, $\{0124, 0134, 0234\}$ form a type
$211$ clash, $\{0123, 0124, 0134\}$ form a type $1211$ clash and 
$\{0123, 0145, 2345\}$ form a type $1212$ clash.
\end{thm}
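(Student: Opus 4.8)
The plan is to reduce each of the four assertions to a bounded, mechanical check. By definition, $S_1,S_2,S_3$ form a type $t$ clash exactly when, for every choice of $v_i\in V_{S_i}$ of type $t$, the triple $\{v_1,v_2,v_3\}$ has no coordinate where exactly one vector is non-zero and no coordinate where the three values are pairwise distinct. Since $\{0,1,2\}$ is the only pairwise-distinct triple drawn from $\{0,1,2\}$, this says exactly that at every coordinate $j$ the multiset $\{v_1(j),v_2(j),v_3(j)\}$ is never $\{0,0,1\}$, $\{0,0,2\}$ or $\{0,1,2\}$. Coordinates outside $S_1\cup S_2\cup S_3$ contribute $\{0,0,0\}$ and may be ignored, so only finitely many coordinates need to be examined.

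I would then record two observations that make the check almost immediate. First, in each of the four configurations every element of the ground set lies in at least two of $S_1,S_2,S_3$ (for $\{0134,0234,1234\}$, say, $0$ lies in the first two sets, $1$ in the first and third, $2$ in the second and third, and $3,4$ in all three); this already rules out ``exactly one non-zero'' at every coordinate. Second, if a coordinate $j$ lies in all three sets then $v_1(j),v_2(j),v_3(j)\in\{1,2\}$ — whether the type pins these symbols down or they are among the free non-zero entries — so the multiset contains no $0$ and is automatically acceptable. Everything therefore reduces to the coordinates $j$ lying in exactly two of the sets, say $S_a$ and $S_b$: there the third value is $0$, and the other two are the symbols $t$ assigns to the rank of $j$ among the non-zero positions of $S_a$ and of $S_b$; the multiset is the forbidden $\{0,1,2\}$ iff those two symbols differ, and otherwise it is $\{0,1,1\}$ or $\{0,2,2\}$, which is fine.

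It then remains, for each of the four cases, to run through the coordinates lying in exactly two supports and check that the corresponding pair of symbols of $t$ agree. For type $11$ on $\{0134,0234,1234\}$, the doubly-covered coordinates $0,1,2$ each occur as the first or second non-zero entry of their two supports, where $t=11$ carries the symbol $1$. For type $211$ on $\{0124,0134,0234\}$, the doubly-covered coordinates are $1,2,3$, occurring at ranks among $\{2,3\}$ in their supports, where $t=211$ reads $1$; the coordinates $0,4$ lie in all three sets. For type $1211$ on $\{0123,0124,0134\}$, the doubly-covered coordinates $2,3,4$ occur at ranks $3$ or $4$, where $t=1211$ reads $1$; the coordinates $0,1$ lie in all three. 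For type $1212$ on $\{0123,0145,2345\}$ no coordinate lies in all three supports, and the only mild subtlety is that a coordinate may have different ranks in its two supports — $2$ is the third non-zero entry of $\{0,1,2,3\}$ but the first of $\{2,3,4,5\}$, and $3$ is the fourth of the former but the second of the latter — yet $t=1212$ carries the same symbol at those paired ranks ($1$ at ranks $1$ and $3$, $2$ at ranks $2$ and $4$), and likewise for the coordinates shared by the remaining pairs of supports.

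The part I expect to need the most care is precisely this last bookkeeping: for each doubly-covered coordinate one must correctly identify its rank among the non-zero entries in each of the two supports containing it — these ranks genuinely differ in the $1212$ case — and then compare the matching symbols of $t$. There is no conceptual obstacle. The one trap worth flagging is the temptation to believe that a coordinate lying in all three supports needs its three symbols to be equal (it does not: $\{1,1,2\}$ and $\{1,2,2\}$ are harmless), and, dually, to worry that some mixed triple other than $\{0,1,2\}$ could be forbidden.
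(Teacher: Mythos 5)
Your proposal is correct and takes essentially the same approach as the paper's proof: a direct, coordinate-by-coordinate verification that at every position the multiset of values is one of the allowed patterns ($\{0,1,1\}$, $\{0,2,2\}$, or all three non-zero). The paper simply lists these multisets for each coordinate (and for the type $1212$ case writes out the three fully determined vectors $121200$, $120012$, $001212$), while you organize the identical check via the coverage/rank bookkeeping; the content is the same.
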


\begin{proof}
Let $a, b, c$ be elements of $V_{0134}$, $V_{0234}$ and $V_{1234}$ respectively of type 11.
Then for $0\le i\le 2$, $\{a_i, b_i, c_i\}$ is equal to $\{0, 1, 1\}$ and for $3\le i\le 4$,
all three elements are non-zero. Thus $\{a, b, c\}$ form a clash.

Similarly if $a, b, c$ are elements of $V_{0124}$, $V_{0134}$ and $V_{0234}$ respectively of 
type 211, then for $1\le i\le 3$, $\{a_i, b_i, c_i\}$ is equal to $\{0, 1, 1\}$ and for 
$i=0, 4$ all three elements are non-zero. Thus $\{a, b, c\}$ form a clash.

Similarly if $a, b, c$ are elements of $V_{0123}$, $V_{0124}$ and $V_{0214}$ respectively of 
type 1211, then for $2\le i\le 4$, $\{a_i, b_i, c_i\}$ is equal to $\{0, 1, 1\}$ and for 
$0\le i\le 1$ all three elements are non-zero. Thus $\{a, b, c\}$ form a clash.

Finally if $a, b, c$ are elements of $V_{0123}$, $V_{0145}$ and $V_{2345}$ respectively then
$a, b, c$ are $121200$, $120012$ and $001212$ which clearly form a clash.
\end{proof}

Given a vector $v\in\{0,1,2\}^m$, write $v^*$ for the vector for which 
$v^*_i=
\begin{cases}
0&\textrm{ if }v_i=0\\
3-v_i&\textrm{ otherwise}.
\end{cases}$

\begin{lem}\label{L:clashes can be starred}
If $S_1, S_2, S_3\in\binom{[m]}w$ form a type $t$ clash they also form a type $t^*$ clash.	
\end{lem}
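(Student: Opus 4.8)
The plan is to exploit the fact that the map $v\mapsto v^*$ respects every piece of structure appearing in the statement. I would first record three elementary observations. (a) Starring is an involution, $(v^*)^*=v$, since $3-(3-x)=x$ for $x\in\{1,2\}$. (b) It preserves supports, because $v^*_i=0$ if and only if $v_i=0$; in particular $v\mapsto v^*$ is a bijection from $V_S$ to itself for every $S\subseteq[m]$. (c) It converts types: since the support is unchanged and the $i^{\textrm{th}}$ non-zero coordinate of $v^*$ is $3$ minus the $i^{\textrm{th}}$ non-zero coordinate of $v$, a vector $v$ is of type $t$ if and only if $v^*$ is of type $t^*$ (using $(t^*)^*=t$).

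The heart of the argument is the claim that, for any $v_1,v_2,v_3\in\{0,1,2\}^m$, the triple $\{v_1,v_2,v_3\}$ is a clash if and only if $\{v_1^*,v_2^*,v_3^*\}$ is a clash. To prove this I would work coordinate by coordinate. At a fixed coordinate $i$ the two ways the triple can fail to be a clash are: exactly one of the three entries is non-zero, or the three entries are pairwise distinct (equivalently, the multiset of entries equals $\{0,1,2\}$). The coordinatewise star map $x\mapsto x^*$ fixes $0$ and swaps $1\leftrightarrow 2$, so it preserves the number of non-zero entries among $(v_1)_i,(v_2)_i,(v_3)_i$ and it sends the multiset $\{0,1,2\}$ to $\{0,2,1\}=\{0,1,2\}$. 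Hence coordinate $i$ witnesses a failure for $(v_1,v_2,v_3)$ exactly when it does for $(v_1^*,v_2^*,v_3^*)$, and ranging over all $i$ gives the claim.

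Finally I would assemble the lemma. Assume $S_1,S_2,S_3$ form a type $t$ clash, and let $u_1\in V_{S_1}$, $u_2\in V_{S_2}$, $u_3\in V_{S_3}$ be arbitrary vectors of type $t^*$. Set $v_j=u_j^*$. By (b) each $v_j$ lies in $V_{S_j}$, and by (c) each $v_j$ has type $(t^*)^*=t$; hence $\{v_1,v_2,v_3\}$ is a clash by hypothesis, and so by the coordinatewise observation $\{v_1^*,v_2^*,v_3^*\}=\{u_1,u_2,u_3\}$ is a clash. Since the $u_j$ were arbitrary of type $t^*$, the sets $S_1,S_2,S_3$ form a type $t^*$ clash.

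I do not expect any real obstacle here; the only point requiring care is the symmetry of the three-element clash condition under starring, and that reduces to the one-line check that the coordinatewise star map fixes $0$ and permutes $\{1,2\}$, so that both ``exactly one non-zero entry'' and ``the entries are $\{0,1,2\}$'' are preserved.
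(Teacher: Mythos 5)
Your proof is correct and follows essentially the same route as the paper: star an arbitrary type-$t^*$ triple to obtain a type-$t$ triple in the same supports, invoke the hypothesis, and observe that the clash condition is invariant under the coordinatewise star map. You spell out the coordinatewise invariance in slightly more detail than the paper does, but the argument is the same.
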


\begin{proof}
If $v_1, v_2$ and $v_3$ are elements of $V_{S_1}$, $V_{S_2}$ and $V_{S_3}$ respectively of
type $t^*$, then $v_1^*$, $v_2^*$ and $v_3^*$ are elements of the same sets of type $t$ and so
by definition, they form a clash.

This means that for each coordinate, either none of the vectors are non-0, all of them are,
or there are exactly two which are non-0 and they are equal. The same is clearly true of
$v_1, v_2$ and $v_3$ so they form a clash.	
\end{proof}

This allows us to show that there are no small admissible sets all of the same type.

\begin{cor}\label{C:no monotype sets}
	For any $t$ in $\{11, 1211, 122, 211, 2122, 22\}$ there are no $I(5,4)$ admissible sets
	where each element is of type $t$. Further, for any $t$ in $\{1212, 2121\}$ there are no
	$I(6,4)$ admissible sets where each element is of type $t$.	
\end{cor}

\begin{proof}
	Theorem~\ref{T:clashes exist} shows the existence of type $t$ clashes in
	$\binom{[5]}4$ for $t\in\{11, 1211, 211\}$, from which Lemma~\ref{L:clashes can be starred}
	gives the same clashes for $t\in\{122, 2122, 22\}$, and then Claim~\ref{C:clashes matter}
	shows that there are no $I(5,4)$ admissible sets where each element is of type $t$ for all
	such $t$.
	
	Similarly Theorem~\ref{T:clashes exist} shows the existence of a type 1212 clash in
	$\binom{[6]}4$, from which Lemma~\ref{L:clashes can be starred} gives the same clash for
	type 2121 and then Claim~\ref{C:clashes matter} shows that there are no $I(6,4)$ admissible
	sets where each element is of type 1212 or where each element is of type 2121.
\end{proof}

This then allows us to show a specific $I(m,w)$ admissible set does not exist. Let 
$R^{(k)}(n_1, \ldots, n_c)$ denote the Ramsey number for $n_1, \ldots, n_c$ (ie the smallest number
$N$ such that if we colour the elements of $\binom{[N]}{k}$ with $c$ colours, there will exist
a $1\le i\le c$ such that some subset of $[N]$ of size $n_i$ is all coloured with colour $i$.)

\begin{cor}\label{C:boom}
	If $N\ge R^{(4)}(5, 5, 5, 5, 5, 5, 6, 6)$ then there does not exist an $I(N, 4)$ admissible set.	
\end{cor}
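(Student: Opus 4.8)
The plan is to apply Ramsey's theorem for 4-uniform hypergraphs directly. Suppose $N \ge R^{(4)}(5,5,5,5,5,5,6,6)$ and, for contradiction, that $A$ is an $I(N,4)$ admissible set. Every $S \in \binom{[N]}{4}$ contains a unique vector $v_S \in A$, and $v_S$ has exactly $4$ non-zero coordinates, so it has a well-defined type in $\{1,2\}^4$. There are $16$ possible types, but by symmetry we can reduce the count: reordering the coordinates of $[N]$ does not change whether $A$ is admissible, and the relevant invariant of a type $t_1t_2t_3t_4$ under the relabelling that reverses the order of the support is $t_4t_3t_2t_1$; combined with the starring operation $t \mapsto t^*$ from Lemma~\ref{L:clashes can be starred}, the $16$ types collapse into orbits. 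The key point is that every type is equivalent (for the purpose of forbidding admissible sets) to one containing a consecutive pair of equal symbols among $\{11,22\}$ — i.e. the pattern ``$11$'' or ``$22$'' appears as a cyclic factor after possibly reversing — \emph{except} for the two alternating types $1212$ and $2121$. Concretely, colouring each $S \in \binom{[N]}{4}$ by the type of $v_S$, we merge the colour classes into the $8$ classes $\{11\text{-containing}\}, \{1211, 2122, 122, 211, 22\}$ absorbed appropriately, plus the two singleton classes $\{1212\}$ and $\{2121\}$, to match the $8$ arguments of the Ramsey number; I would list the merge explicitly so that six classes need a monochromatic set of size $5$ and the two alternating classes need size $6$.

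Next I would apply the definition of $R^{(4)}(5,5,5,5,5,5,6,6)$: since $N$ is at least this Ramsey number, one of two things happens. Either there is a set $T \subseteq [N]$ with $|T| = 5$ such that for every $S \in \binom{T}{4}$ the vector $v_S$ has a type $t$ lying in one of the six ``$5$-classes'', or there is a set $T \subseteq [N]$ with $|T| = 6$ such that for every $S \in \binom{T}{4}$ the vector $v_S$ has type $1212$ (resp. type $2121$). In the first case, restricting $A$ to the $\binom{5}{4} = 5$ sets contained in $T$ yields a configuration that is an $I(5,4)$ admissible set on the ground set $T$ in which every element has a type in $\{11, 1211, 122, 211, 2122, 22\}$ — but Corollary~\ref{C:no monotype sets} asserts no such set exists, a contradiction. (A small subtlety: the six classes are not literally singletons — e.g. a monochromatic set coloured by ``contains $11$'' might mix types $1112$ and $1211$; I would handle this by noting that Corollary~\ref{C:no monotype sets}'s proof via Theorem~\ref{T:clashes exist} and Lemma~\ref{L:clashes can be starred} actually forbids \emph{any} $I(5,4)$ set whose elements all share a fixed type, and the cleanest route is to refine the colouring so that each of the $16$ types is its own colour and then observe $16 \le$ the number of colours we have room for — or, better, argue that after the reversal/starring symmetry reduction there really are at most $8$ inequivalent types and each of the six non-alternating ones individually appears in Corollary~\ref{C:no monotype sets}.) In the second case, restricting $A$ to the $\binom{6}{4} = 15$ sets inside $T$ gives an $I(6,4)$ admissible set on ground set $T$ all of whose elements are of type $1212$ (resp. $2121$), again contradicting Corollary~\ref{C:no monotype sets}.

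The main obstacle, and the step I would be most careful about, is the bookkeeping of types and symmetries: I need the $16$ types in $\{1,2\}^4$ to be partitioned into exactly $8$ Ramsey colour classes in such a way that (a) six of the classes are ``killed'' by an $I(5,4)$ non-existence statement and two are killed by an $I(6,4)$ statement, and (b) a monochromatic $\binom{T}{k}$ in colour class $c$ genuinely produces a monotype $I(k,4)$ admissible set — which, if a class contains several types, requires that non-existence hold for each type in that class, or that a further pigeonhole inside $T$ extract a single type. The honest way to close this gap is to use the permutation symmetry of the coordinates: an $I(m,4)$ admissible set all of whose vectors are of type $1112$ is isomorphic, by a permutation of $[m]$, to one all of whose vectors are of type $1211$ (reverse the support order), and $1211$ is on the Corollary~\ref{C:no monotype sets} list; similarly $2221 \sim 2122$, $2221^* = 1112$, and so on, so that modulo reversal-and-starring the non-alternating types really do reduce to the six names appearing in the corollary, while $1212$ and $2121$ are fixed (up to swapping) by these symmetries and genuinely require the size-$6$ argument. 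Once that reduction is pinned down the Ramsey application is immediate. I would therefore present it as: reduce to $8$ inequivalent types via reversal and Lemma~\ref{L:clashes can be starred}, invoke Corollary~\ref{C:no monotype sets} for all eight, colour $\binom{[N]}{4}$ by type-up-to-equivalence, apply the Ramsey number, and derive the contradiction.
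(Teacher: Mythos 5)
Your overall architecture --- colour $\binom{[N]}{4}$ by the type of $v_S$, apply the eight-colour Ramsey number, and contradict Corollary~\ref{C:no monotype sets} --- is exactly the paper's. But the step you yourself flag as the weak point, namely partitioning the $16$ strings of $\{1,2\}^4$ into the $8$ colour classes, is a genuine gap, and none of the three repairs you sketch closes it. Refining to $16$ colours is not available: $N\ge R^{(4)}(5,5,5,5,5,5,6,6)$ only guarantees a monochromatic set for an $8$-colouring, and Ramsey numbers can only increase when the colouring is refined, so the stated hypothesis would no longer suffice. The reversal-plus-starring reduction fails twice over. First, the bookkeeping is wrong: reversal and starring generate a group of order four whose action on $\{1,2\}^4$ has six orbits, not eight (and, for instance, $1112$ reverses to $2111$, not to $1211$). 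Second, and fatally, a colour class that is a union of several full types cannot be handled by a global symmetry of $[N]$: a $5$-set $T$ all of whose $4$-subsets land in the class $\{1112,2221,2111,1222\}$ may contain both a vector of type $1112$ and one of type $2221$, and no single coordinate permutation or starring of the ambient space turns all five vectors into a common type, so Corollary~\ref{C:no monotype sets} says nothing about such a $T$; nor does pigeonholing five vectors among four types leave you a usable monotype triple with prescribed supports.

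The missing idea is that the paper defines ``type'' as a \emph{prefix} condition: a weight-$4$ vector whose non-zero entries read $1112$ \emph{is} of type $11$, since only its first two non-zero coordinates are consulted. Under this reading the eight strings $11, 1211, 1212, 122, 211, 2121, 2122, 22$ are the leaves of a prefix tree and genuinely partition $\{1,2\}^4$ into eight classes ($11$ absorbs $1111,1112,1121,1122$; $122$ absorbs $1221,1222$; and so on), and --- crucially --- the clashes of Theorem~\ref{T:clashes exist} are established for these prefix types, i.e.\ any three vectors with the stated supports and the stated prefix clash regardless of their remaining non-zero entries. Hence a $5$-set all of whose $4$-subsets carry prefix $11$ is directly forbidden by Corollary~\ref{C:no monotype sets} even though its five vectors may realise four different full strings, and likewise for the other five ``size-$5$'' classes and the two alternating ``size-$6$'' classes. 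With that partition the Ramsey step you describe goes through verbatim; without it, the colouring you propose does not match the eight arguments of the stated Ramsey number.
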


\begin{proof}
	Suppose otherwise. Consider such an $I(N,4)$ admissible set
	$\{v_S:S\in\binom{[N]}4\}$, where 
	$N=R^{(4)}(5, 5, 5, 5, 5, 5, 6, 6)$. 
	Firstly we note that all vectors in $\{0, 1, 2\}^n$ with at least 4 elements in their support
	are of precisely one of the types $\{11, 1211, 122, 211, 2122, 22, 1212, 2121\}$.
	
	Thus we can colour $\binom{[N]}4$ by colouring $S$ with whichever of these types $v_S$ is. 
	Then we will either get a subset $S$ of $[N]$ of size 5 for which all elements of
	$\binom{S}4$ are of the same type in $\{11, 1211, 122, 211, 2122, 22\}$ or a subset
	$S$ of $[N]$ of size 6 for which all elements of $\binom{S}4$ are of the same type in
	$\{1212, 2121\}$. Either way Corollary~\ref{C:no monotype sets} means there is a clash in 
	our supposedly admissible set, which forms a contradiction.
\end{proof}

\section{$I(m,m-2)$ sets}
Suppose we have an $I(m,m-2)$ admissible set $S$. For all $1\le i<j\le n$, there is exactly one vector 
$v(i,j)$ in $S$ such that $v(i,j)_i=0$ and $v(i,j)_j=0$. 
Let us induce an 8-colouring $c_S$ on the elements of
$\binom{[m]}3$ by $c_S((i,j,k))=(v(i,j)_k, v(i,k)_j, v(j,k)_i)$ for $i<j<k$.

\begin{lem}\label{L:triples}
	If all triples have the same colour from $\{111, 112, 122, 211, 221, 222\}$ then $m\le 3$. 
	If all triples have the same colour from $\{121, 212\}$ then $m\le 5$.
\end{lem}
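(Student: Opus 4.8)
The plan is to exploit the constancy of $c_S$ to pin every vector of $S$ down completely, and then read off an explicit clash as soon as $m$ exceeds the claimed bound. Write $(\alpha,\beta,\gamma)\in\{1,2\}^3$ for the common colour. Unwinding the definition of $c_S$ on the triple $i<j<k$ gives $v(i,j)_k=\alpha$, $v(i,k)_j=\beta$ and $v(j,k)_i=\gamma$; applied to a fixed pair $\{a,b\}$ with $a<b$ and an arbitrary coordinate $c\notin\{a,b\}$, this says $v(a,b)_c$ equals $\gamma$ if $c<a$, equals $\beta$ if $a<c<b$, and equals $\alpha$ if $c>b$. Equivalently, $v(a,b)$ is the word that reads $\gamma$ on $\{1,\dots,a-1\}$, then $0$, then $\beta$ on $\{a+1,\dots,b-1\}$, then $0$, then $\alpha$ on $\{b+1,\dots,m\}$.

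For the first statement I would assume $m\ge 4$ and work with a ``star'' of pairs through one of four chosen coordinates $q_1<q_2<q_3<q_4$: fix $x\in\{q_1,q_2,q_3,q_4\}$ and consider the three vectors $v(\{x,q\})$ for $q$ ranging over the other three $q_i$. At any coordinate outside $\{q_1,q_2,q_3,q_4\}$ all three of these vectors are non-zero, so no such coordinate can carry exactly one non-zero entry or three distinct values; at $x$ all three are $0$; and at each of the other three $q_i$ exactly two of the vectors are non-zero, so the triple is a clash precisely when those two entries agree for each of those three coordinates. A one-line computation with the word shape above shows that centring the star at $q_1$ makes this happen exactly when $\alpha=\beta$, and centring it at $q_4$ exactly when $\beta=\gamma$. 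Each colour in $\{111,112,122,211,221,222\}$ satisfies $\alpha=\beta$ or $\beta=\gamma$, so picking $x$ accordingly exhibits a clash in $S$ whenever $m\ge4$, contradicting admissibility; hence $m\le3$.

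For the second statement, note that $121$ and $212$ are exactly the two colours with $\alpha\ne\beta$ and $\beta\ne\gamma$, which forces $\alpha=\gamma$. Here I would assume $m\ge6$ and take the three disjoint pairs $\{1,2\},\{3,4\},\{5,6\}$. Each of these pairs consists of two consecutive integers, so the ``$\beta$'' stretch in the word shape above is empty for all three, and hence every non-zero entry of each of $v(1,2)$, $v(3,4)$, $v(5,6)$ equals the common value $\alpha=\gamma$. Then at every coordinate the three entries form the multiset $\{0,\alpha,\alpha\}$ or $\{\alpha,\alpha,\alpha\}$, so no coordinate carries exactly one non-zero entry and none carries three distinct values: the triple is a clash, again contradicting admissibility, and so $m\le5$. (Alternatively one could treat only the colour $121$ and obtain $212$ from Lemma~\ref{L:clashes can be starred} applied to $S^{*}=\{v^{*}:v\in S\}$, whose induced colouring is $121$.)

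The only point needing care is the bookkeeping in the star argument --- one must keep track that the smallest of the four chosen coordinates is the one detecting $\alpha=\beta$ while the largest detects $\beta=\gamma$, so the centre $x$ has to be chosen according to which equality the colour satisfies --- but this is a short finite check. The real content of the proof is the first paragraph: recognising that a constant colouring rigidifies $S$ into vectors of this interval shape, after which both bounds are essentially forced.
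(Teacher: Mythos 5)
Your proof is correct and follows the same overall strategy as the paper: a constant colour $(\alpha,\beta,\gamma)$ rigidifies each $v(a,b)$ into the shape $\gamma\cdots\gamma\,0\,\beta\cdots\beta\,0\,\alpha\cdots\alpha$, after which one exhibits an explicit clash; your second paragraph (the disjoint pairs $\{1,2\},\{3,4\},\{5,6\}$ for the colours $121,212$) is literally the paper's argument. The one substantive difference is in the first part: the paper fixes the single ``top star'' $v(1,4),v(2,4),v(3,4)$ as the clash for the colours $111$ and $112$ and then reaches the other four colours by inverting $1\leftrightarrow 2$ and by reversing the coordinate order, whereas you choose the centre of the star according to which of $\alpha=\beta$ or $\beta=\gamma$ the colour satisfies. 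Your organisation is in fact the more careful one: reading the colour string literally as $(v(i,j)_k,\,v(i,k)_j,\,v(j,k)_i)$, the colour $112$ has $\beta\ne\gamma$, so the top star fails to clash at coordinate $2$ (the entries there are $\beta,0,\gamma$); the strings $0110\ldots$, $1010\ldots$, $1100\ldots$ displayed in the paper for that case are only consistent with reading the colour triple in the opposite order. Your observation that every colour outside $\{121,212\}$ satisfies $\alpha=\beta$ or $\beta=\gamma$, combined with the two stars, covers all six cases cleanly and without relying on that orientation convention.
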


\begin{proof}
	Note if we know the colour of all triples, we know the entire set. If $m\ge 4$ and all triples
	have colour 111 or 112, then the vectors $v(1,4)$, $v(2,4)$ and $v(3,4)$ are 
	$0110\ldots$, $1010\ldots$ and $1100\ldots$
	which clash. By inverting the colours, we get the same result for 221 and 222. Further, 
	by reversing the strings which clash for 112, we get strings which clash for 211 (and then by
	inverting the colours for 122).
	
	If all triples have colour 121 and $m\ge 6$, then the vectors $v(1,2)$, $v(3,4)$ and $v(5,6)$ are
	$001111\ldots$, $110011\ldots$ and $111100\ldots$ which clash. Inverting the colours gives the
	same result for 212.
\end{proof}

Then as before Ramsey's Theorem shows that $I(m,m-2)$ admissible sets only exist for finitely many $m$.

\begin{cor}\label{C:mic}
	If $m\ge R^{(3)}(4,4,4,4,4,4,6,6)$ then an $I(m,m-2)$ admissible set does not exist.
\end{cor}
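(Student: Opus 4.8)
The plan is to argue by contradiction, mirroring the proof of Corollary~\ref{C:boom} but with one extra wrinkle caused by the fact that restrictions of admissible sets need not be admissible. Suppose $m \ge R^{(3)}(4,4,4,4,4,4,6,6)$ and that $S$ is an $I(m,m-2)$ admissible set, and let $c_S$ be the induced $8$-colouring of $\binom{[m]}{3}$. Identify the six colours $111,112,122,211,221,222$ with the first six Ramsey parameters and the colours $121,212$ with the last two. Applying Ramsey's theorem to $c_S$ yields either a $4$-element set $T\subseteq[m]$ all of whose triples receive a common colour from $\{111,112,122,211,221,222\}$, or a $6$-element set $T\subseteq[m]$ all of whose triples receive a common colour from $\{121,212\}$.

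Having fixed $T$, relabel the coordinates of $[m]$ by a permutation carrying $T$ to $\{1,2,3,4\}$ (resp.\ $\{1,\dots,6\}$); this carries $S$ to another $I(m,m-2)$ admissible set whose induced colouring is constant on $\binom{T}{3}$. Exactly as in the proof of Lemma~\ref{L:triples}, knowing this common colour pins down the $T$-coordinates of the vectors $v(1,4),v(2,4),v(3,4)$ (for colour $111$, and their images under coordinate reversal and starring for the other five colours) or of $v(1,2),v(3,4),v(5,6)$ (for colour $121$, and its starred image for $212$), and the computation there shows that the restrictions of these three distinct vectors to the coordinates of $T$ already form a clash. The key new observation is that each of these three vectors has support containing all of $[m]\setminus T$: in every case the two coordinates omitted from the support of each vector lie inside $T$.

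It remains to lift this to a clash in $S$ itself. Write $w_1,w_2,w_3$ for the three vectors just described. At each coordinate in $T$ their restrictions clash, so no such coordinate has exactly one of them non-zero and none has all three values distinct. At each coordinate $p\notin T$ all three of $w_1,w_2,w_3$ are non-zero, so certainly not exactly one is non-zero, and since their values at $p$ lie in $\{1,2\}$, three such values cannot be pairwise distinct, so they are not all distinct either. Hence $\{w_1,w_2,w_3\}$ is a clash in $S$, contradicting admissibility, and this proves the corollary. The main obstacle — and the reason this does not follow by simply restricting $S$ to $T$ and quoting Lemma~\ref{L:triples} — is precisely that a restriction of an admissible set can fail to be admissible (unlike in the $I(N,4)$ setting, where the supports are contained in $T$); the argument goes through only because the particular clashing triples produced in Lemma~\ref{L:triples} are fully supported off $T$, which is exactly what makes the pigeonhole step on the coordinates outside $T$ available.
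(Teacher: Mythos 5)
Your proof is correct and follows essentially the same route as the paper: apply Ramsey's theorem to the colouring $c_S$ and then invoke the clashes constructed in the proof of Lemma~\ref{L:triples}. The only difference is that you make explicit a step the paper leaves implicit — namely that the Lemma, as stated, concerns a colouring that is constant on \emph{all} of $\binom{[m]}{3}$, so applying it to a monochromatic subset $T$ requires checking that the clashing vectors are determined on $T$ by the triples inside $T$ alone and are fully supported on $[m]\setminus T$ — which is a worthwhile clarification rather than a departure.
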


\begin{proof}
	If we have such an admissible set $S$ then the colouring $c_S$ would either have a monochromatic
	set of size at least 4 in a colour from $\{111, 112, 122, 211, 221, 222\}$ or a monochromatic
	set of size at least 6 in a colour from $\{121, 212\}$. Lemma~\ref{L:triples} shows no such
	monochromatic set can occur.
\end{proof}

\section{Summary}\label{S:closing}
We can then prove Theorem~\ref{T:main} by combining Corollary~\ref{C:boom} and Corollary~\ref{C:mic}.
\begin{proof}[Proof of Theorem~\ref{T:main}]
	Suppose $S$ is an $I(n, w)$ admissible set $S$ where $n>w>0$ and let 
	$S_1=\{(v_1, \ldots, v_{n-1}):(v_1, \ldots, v_n)\in S, v_n=0\}$ and
	$S_2=\{(v_1, \ldots, v_{n-1}):(v_1, \ldots, v_n)\in S, v_n\ne0\}$. 
	Then it is clear that $S_1$ is an $I(n-1, w)$ admissible set and $S_2$ is
	an $I(n-1, w-1)$ admissible set.

	Thus if no $I(m, w)$ admissible set exists then nor does an $I(m+1, w)$ admissible set
	or an $I(m+1, w+1)$ admissible set.
	
	By Corollary~\ref{C:boom} there does not exist an $I(N, 4)$ admissible set for 
	\[N>=C_1=R^{(4)}(5, 5, 5, 5, 5, 5, 6, 6).\] It then follows that if an $I(m, w)$ admissible set
	exists with $4\le w\le m-2$, then an $I((m-w)-4, 4)$ admissible set exists and so
	$m-w<C_1+4$.
	
	By Corollary~\ref{C:mic} there does not exist an $I(N, N-2)$ admissible set for
	\[N>=C_2=R^{(3)}(4, 4, 4, 4, 4, 4, 6, 6).\] It then follows that if an $I(m, w)$ admissible set
	exists with $4\le w\le m-2$, then an $I(w+2, w)$ admissible set exists and so 
	$w<C_2$.
	
	Thus we see that if an $I(m, w)$ admissible set
	exists with $4\le w\le m-2$ then both $w$ and $m-w$ are bounded, so only finitely many such
	can exist.
\end{proof}

For numbers in the range $m>w>0$ which do not satisfy $4\le w\le m-2$, there are known constructions
for $I(m,w)$ admissible sets. Namely the vectors of weight 1 and type 1 form an $I(m,1)$ admissible 
set, the vectors of weight 2 and type 12 form an $I(m,2)$ admissible set and the vectors of weight 3
and type 123 form an $I(m,3)$ admissible set. Finally for $m\ge 2$, the vectors 
\begin{align*}
v(i)&=(v(i)_1, v(i)_2, \ldots, v(i)_m):1\le i\le m	\\
v(i)_j&=\begin{cases}
	1\textrm{ if }i<j,\\
	2\textrm{ if }i>j,\\
	0\textrm{ if }i=j
\end{cases}
\end{align*}
form an $I(m,m-1)$ admissible set. Thus only the sets with $4\le w\le m-2$ were open and 
Theorem~\ref{T:main} resolves their existence for all but a finite (admittedly somewhat large)
set of values.

In~\cite{Tyrrell}, Tyrrell conjectured the existence of all $I(m, w)$ sets as if one could prove
that $I(31k, 28k)$ admissible sets exist, then for all $\epsilon>0$, it would follow that
for sufficiently large $n$ there exist cap sets in $\mathbb{F}_3^n$ of size at least
$(124^{1/6}-\epsilon)^n$. Unfortunately, we have now seen that these sets
do not exist for sufficiently large $k$. 

However, the main result in~\cite{Tyrrell} is a 
strong bound which is proved by constructing an admissible set with supports in 
$\binom{[1562]}{990}$ which is very large but does not contain all $\binom{1562}{990}$ possible 
elements. 

\begin{qn}
\begin{enumerate}
\item Which $I(m,w)$ admissible sets exist with $4\le w\le m-2$? From Tyrrell it is known that all such
	  exist for $m\le 11$ and we have also used the answer set programming library clingo to create an I(12,4)
	  admissible set.
\item For $m>w\ge 4$ what is the maximum number $f(m, w)$ of weight $w$ vectors one can have in 
	an admissible set in $\{0, 1, 2\}^m$?
\end{enumerate}
\end{qn}

Given an admissible set in $\{0, 1, 2\}^m$ with $f(m, w)$ weight $w$ vectors, one can 
construct a capset in $\F_3^{36m}$ of size $f(m, w)(72\times 112^5)^{m-w}(112^6)^w$ and hence 
can show that for all sufficiently large $n$, there exist cap-sets in $\F_3^{n}$ of size
$(f(m,w)^{1/36m}(72\times 112^5)^{1-w/m}(112^6)^{w/m}-\epsilon)^{1/n}$.

\printbibliography
\end{document}